\documentclass[letterpaper, 10 pt, conference]{ieeeconf}  
\IEEEoverridecommandlockouts                          
\title{\LARGE \bf
A Mean-Field Game Model For Large-Scale Attrition in Attacker--Defender Systems
}

\usepackage[utf8]{inputenc}
\usepackage{amsfonts}
\usepackage{hyperref}
\usepackage{amsmath}
\usepackage{xcolor}
\DeclareUnicodeCharacter{2212}{-}

\usepackage{amsthm}

\usepackage{pdflscape}
\usepackage{mathrsfs}
\usepackage{bigints}
\usepackage{graphicx}
\usepackage{subcaption}

\hypersetup{
colorlinks=true,
linkcolor=red,
urlcolor=blue,
citecolor=red
}

\usepackage{bm}
\usepackage{commath}
\usepackage{amssymb}
\usepackage{eurosym}
\usepackage{float}
\usepackage{graphics}
\usepackage{epsfig}
\usepackage{placeins}

\allowdisplaybreaks
\makeindex

\renewcommand{\div}{\operatorname{div}}

\newtheorem{theorem}{Theorem}

\newtheorem{problem}{Problem}

\newcommand{\T}{\mathbb{T}}

\author{$^*$Avetik Arakelyan$^{3,1}$, $^*$Tigran Bakaryan$^{3,2,1}$, Davit Alaverdyan$^{2,1}$, Naira Hovakimyan$^{4}$ and Isaac Kaminer$^{5}$%
\thanks{$^{*}$These authors contributed equally.
}%
\thanks{$^{1}$Yerevan State University, Armenia; emails: 
\texttt{\{avetik.arakelyan, davit.alaverdyan2\}@ysu.am}}%
\thanks{$^{2}$Center for Scientific Innovation and Education, Armenia}%
\thanks{$^{3}$Institute of Mathematics of NAS RA, Armenia; email: \texttt{tigran.bakaryan@instmath.sci.am}}%
\thanks{$^{4}$Department of Mechanical Science \& Engineering, University of Illinois Urbana-Champaign, USA; email: \texttt{nhovakim@illinois.edu}}%
\thanks{$^{5}$Department of Mechanical \& Aerospace Engineering, Naval Postgraduate School, Monterey, CA, USA; email: \texttt{kaminer@nps.edu}}%
\thanks{The research is supported by the Higher Education and Science Committee of the MESCS RA under Research Project No.~24IRF-1A001 and in part by the Air Force Office of Scientific Research Grant (AFOSR) Grant AF FA9550-25-1-0274, the National Aeronautics and Space Administration (NASA) under Grant 80NSSC22M0070, and by the National Science Foundation (NSF) under Grants CMMI 2135925, CPS 2311085 and IIS 2331878.
\#80NSSC22M0070.}%
}

\begin{document}

\maketitle
\thispagestyle{empty}
\pagestyle{empty}

\begin{abstract}
This paper proposes a novel Mean-Field Game (MFG) framework for large-scale attacker–defender systems aimed at protecting one or multiple High-Value Units (HVUs). Motivated by classical agent-wise attrition models, we introduce a population-wise attrition mechanism formulated by statistical distance between populations, enabling a macroscopic description of weapon-based interactions between large populations. Leveraging this and  Lions derivative on the space of probability measures, we derive the associated MFG system, which characterizes optimal strategies and the evolution of population distributions in attacker–defender interactions.
We analyze the model by establishing upper and lower bounds on the defender density, ensuring physical consistency by preventing concentration and depletion. For numerical investigation, we develop a numerical scheme combining physics-informed neural networks with Sinkhorn method to solve attacker-defender MFG system.  Simulations confirm the effectiveness of the framework and reveal key insights, including sensitivity to weapon strengths and population dispersion. 
\end{abstract}
\begin{keywords}
Mean-field games, attacker--defender systems, attrition modeling, optimal transport, autonomous defense.
\end{keywords}

\section{Introduction}

Recent technological advances have made robotic swarms a practical
reality, including in military operations such as swarm attacks. This
creates a growing need for autonomous protection of critical
infrastructure against large-scale adversarial threats, where
decision- and game-theoretic methods play a central role
\cite{hunt2024review}. In this work, we study this problem in a
large-scale setting with many attackers, defenders, and HVUs, and
propose a population-wise formulation leading to an attrition-based MFG
attacker--defender model.

Traditionally, adversarial engagements are modeled using
Lanchester-type combat models
\cite{protopopescu1989combat,lanchester1916aircraft}, in which the
states of the agents and the engagement dynamics are combined at a
macroscopic level. However, such models do not explicitly capture weapons impact. More recent works such as
\cite{walton2022defense,tsatsanifos2021,kang2025online},
motivated by Koopman-type radar models, propose agent-wise attrition
formulations in which the weapons of attackers and defenders are modeled
through attrition rate functions, while the two sides act with opposing
objectives: attackers seek to destroy the HVU (or HVUs), whereas
defenders seek to protect it. While these agent-level models provide detailed and
high-fidelity descriptions for small-scale engagements, they become
computationally prohibitive as the numbers of attackers, defenders and HVUs
grow.
To address this scalability issue, we adopt the mean-field game (MFG)
framework. MFGs provide an effective mathematical framework for studying
strategic decision-making in large populations of interacting agents;
see, for instance, \cite{liu2018mean,elamvazhuthi2020mean}. Nevertheless,
MFG formulations for combat-type attacker--defender interactions remain
relatively limited. A related example is \cite{wang2022multi}, where the
interaction is described through attraction--repulsion forces. 
\newline
\noindent \hspace*{7pt} 
In this paper, we propose a novel population-wise attacker--defender MFG model that bridges classical mean-field game formulations and combat attrition models. Building on the survival-probability and damage
attrition concepts in \cite{kang2025online}, we lift these interactions
from the level of individual agents to the level of populations by
introducing a statistical distance between their distributions. This
enables us to model the lethality and spatial interaction between large
attacker and defender swarms. In particular, we employ the
Wasserstein-$2$ distance, which captures an important tactical feature:
the effectiveness of a defender swarm depends not only on its proximity
to the attacker swarm, but also on its spatial spread.
\newline
 \noindent \hspace*{7pt}
 The main contributions of this paper are threefold. First, we introduce
a population-wise attrition rate function (see Sections~\ref{section 3}). Relaying on this, we derive an
attacker--defender mean-field game model given by a coupled backward
Hamilton--Jacobi--Bellman (HJB) and forward Fokker--Planck (FP) equations, \ref{sec:MFG-deriv}. Second, we
establish rigorous two-sided bounds for the defender density, ensuring
physically meaningful evolution by preventing unrealistic concentration
or depletion; see Section~\ref{sec:flowb}. Third, we develop a numerical
scheme based on physics-informed neural networks (PINNs), combined with a
well-known Sinkhorn algorithm for approximation of optimal transport, to solve the resulting
MFG system; see Section~\ref{sec:sim}. The numerical results highlight
strategically relevant effects, including the sensitivity of HVU
protection to weapon strength and to the initial spatial dispersion of
the defender population.

\section{Preliminaries}\label{sec:prelim}

To motivate our population-wise attrition based model,  in this section, we recall agent-wise attrition function based attacker-defender model considered in \cite{walton2018optimal,tsatsanifos2021,walton2022defense, kang2025online}.
In this model the authors treat the problem of optimal defense of a high-value unit (HVU) against a large-scale adversary attack. In the
proposed problem setting, the attackers attempt to destroy
the HVU by approaching it and firing weapons, while
the defenders attempt to save the HVU by shooting down
the attackers.

For the simplicity, we assume that attackers are kamikaze, i.e. they ignore defenders and have single
objective, the destruction of the HVU. Furthermore, we suppose that the dynamics/behavior of attackers is known.

Based on these assumptions, we provide a mathematical model. The general dynamics of attackers and defenders is given by
\[
\begin{cases}
\dot{s}_{i}=F(s_{i},\dot{s}_{i},s_{-i},\dot{s}_{-i},s_{HVU}),\\
\dot{x}_{k}=u_{k},   
\end{cases}
\]
where $k=1,\ldots,M$ and $i=1,\ldots,N.$ Here,  $s_i$ and $x_k$  stand for positions of an $i$-th attacker and $k$-th defender, respectively. The HVU is assumed to be unique and  remain in a fixed position during the time.
 
 To model the attrition of attackers and HVU, the authors in \cite{walton2018optimal,walton2022defense} introduce attrition rate functions that model the weapon of an agent (attacker or defender), which takes the relative distance between the agent and a target as an argument and returns the rate of destruction of the target. Particularly, the rate at which the attacker \(i\) is shot down by defender \(k\) is given by
$$
d^{att}_{ik}(s_{i},x_{k})=d^{att}_{ik}\left(\|s_{i}-x_{k}\|\right).
$$
In the same way, the attrition rate at which HVU is destroyed by attacker \(i\) is given by
$$
d^{H}_{i}(s_{i},s_{HVU})=d^{H}_{i}\left(\|s_{i}-s_{HVU}\|\right),
$$ 
where \(s_{HVU}\) is the position of HVU. 
As an example of attrition rate function (see \cite{kang2025online}) one can take the inverted cumulative normal distribution. It is easy to see that when the distance is \(0\), this function takes a value of \(1\). Otherwise, it decreases smoothly toward \(0\) as the distance increases.  This reflects the core dynamics where the target's chance of survival decreases as the agent gets closer or employs stronger weapons.

The concept of attrition described above allows the attacker–defender interaction to be modeled in a decoupled manner from the perspective of individual agents (attackers or defenders), while coupling arises only through shared targets or collective mission objectives.

Using the definition of the attrition function, we now define the survival probability of attackers.
Specifically, the survival probability of attacker \(i\) at the location $s_i(t)$ from defender \(k\) at the location $x_k(t)$ over a short time interval \([t,t+\Delta t]\) is given by  
\[
1-d^{att}_{ik}(s_{i}(t),x_{k}(t))\Delta t.
\]
Then, denoting by  \(Q_{ik}(t)\)   the survival probability of attacker \(i\) from defender \(k\) at time $t$, we have
\[
Q_{ik}(t+\Delta t)=Q_{ik}(t)\left(1-d^{att}_{ik}(s_{i}(t),x_{k}(t))\Delta t \right).
\] 
This leads to 
\begin{equation}
    \label{def-ode-Q}
\dot{Q}_{ik}(t) =-Q_{ik}(t)d^{att}_{tk}(s_{i}(t),x_{k}(t)), \;\;\text{as}\;\;\Delta t\to 0.
\end{equation}
 For the attacker $i$ to remain alive at time $t$, it must survive the attacks of all defenders throughout the entire interval $[0,t]$. Therefore, letting $Q_{i}(t)$ denote the overall survival probability of attacker $i$, we have
\[
Q_{i}(t)=\Pi^{M}_{k=1}Q_{ik}(t).
\]
 
Similarly, for the HVU to survive, it must withstand all attacks from the attackers.
Hence, the survival probability of the HVU over the short interval \([t,t+\Delta t]\) is the product of its survival probabilities against each attacker $i$, and can therefore be expressed as
\(\Pi^{N}_{i=1}\left(1-d^{H}_{i}Q_{ik}(t)\Delta t\right)\). 
Denoting \(P(t)\) as the survival probability of HVU from all the attackers, we arrive at
\[P(t+\Delta t)=P(t)\prod_{i=1}^{N}\left(1-d^{H}_{i}(s_{i}(t),s_{HVU}(t)) Q_{i}(t)\Delta t\right),\] 
which leads to
\begin{equation}\label{def-ode-P}
    \dot{P}(t) =-P(t)\sum_{i=1}^{N}d^{H}_{i}(s_{i}(t),s_{HVU}(t))Q_{i}(t), 
\end{equation}
as $\Delta t\to 0.$
At the beginning of the engagement ($t=0$), all attackers and the HVU are assumed to be alive. Therefore, the initial conditions are set as \(Q_{ik}(0)=1\) and \(P(0)=1\). With these initial conditions,  the explicit solutions of the ODEs in \eqref{def-ode-Q} and \eqref{def-ode-P} are given by
\begin{align*}
Q_{ik}(t)& =e^{-\int_{0}^{t}d^{att}_{tk}(s_{i}(\tau),x_{k}(\tau))d\tau},\\ 
P(t)&=e^{-\int_{0}^{t}\sum_{i=1}^{N}d^{H}_{i}(s_{i}(\tau),s_{HVU }(\tau))Q_{i}(\tau)d\tau} \\
&=e^{-\int_{0}^{t}\sum_{i=1}^{N}d^{H}_{i}(s_{i}(\tau),s_{HVU }(\tau))\Pi^{M}_{k=1}Q_{ik}(\tau)d\tau}.
\end{align*}

Thus,  the survival probability of the HVU at the terminal time \(t=t_{f}=T\) is
\begin{multline*} 
        J(u_{1},\ldots,u_{M}) = 1-P(T)\\=1-e^{-\int_{0}^{t_{f}}\sum_{i=1}^{N}d^{H}_{i}(s_{i}(\tau ),s_{HVU}(\tau))\Pi^{M}_{k=1}Q_{ik}(\tau)d\tau},
\end{multline*}
where $u_{i}$ is the control input of the defender (decision maker) $i$.
Having established this formulation, the objective-maximization of the HVU’s survival probability-can be posed as a distributed optimization problem.
However, to enable scalability to scenarios involving a large number of agents, we state the problem in the form of an equivalent game formulation.
This formulation also facilitates further generalization, allowing for heterogeneous agent objectives in more complex settings. 
\begin{problem}\label{problem-1-Isaac}
Find the optimal defensive control inputs \(u_{k}\), \(k=1,\ldots, M\),  solving the following optimization problems:
   \begin{align*}
\min_{u_{k}} & \; J(u_{1},\ldots,u_{M})=\min_{u_{k}}(1-P(T)) \\
\text{s.t.} & \quad \dot{s}_{i}=F(s_{i},\dot{s}_{i},s_{-i},\dot{s}_{-i},s_{k},s_{HVU}) \\
& \quad \dot{x}_{k}=u_{k}.
\end{align*} 
\end{problem}
\section{Population-wise Setting}\label{section 3}
In this section, we introduce a population-wise attrition rate function
that captures macroscopic interactions at the level of distributions, and
derive the corresponding HVU protection model governed by this mechanism.
 
\subsection{Population-wise Attrition Rate Function}\label{section 3.1}

In this subsection, we introduce a population-wise attrition rate function
defined at the level of shooter and target populations. Each population
is represented by a distribution, and the attrition of the target population
is determined by a statistical distance between these distributions.

Suppose that we are given populations of shooters and targets with
distributions $\sigma_s$ and $\sigma_t$, respectively. Let
$R(\sigma_s,\sigma_t)$ denote a statistical distance between these
distributions, to be specified later. We define the population-wise
attrition rate as
\begin{equation*}
    d^{att}(\sigma_s, \sigma_{t})\equiv d^{att}(R(\sigma_s, \sigma_{t})).
\end{equation*}
The attrition rate depends only on the statistical distance between the shooter and target populations.
This formulation reflects the idea that
the effectiveness of the attack is primarily determined by the relative
spatial configuration of the two populations. In particular, we assume
that shooters are equipped with weapons whose effectiveness depends on
the proximity between shooters and targets. Throughout this paper, we
focus on such interaction mechanisms; other weapon models can be handled
in a similar manner.

As a specific example, we may consider the function
\[
d^{att}\left(R\left(\sigma_s, \sigma_{t}\right)\right)=\lambda \exp(-\tfrac{R^2(\sigma_s, \sigma_{t})}{\sigma}).
\]
Note that this attrition rate attains its maximum value $\lambda$ when
the statistical distance between shooters and targets is zero, and
decreases to $0$ as the distance increases.
 The parameters $\lambda > 0 $ and $\sigma  >0$ are fixed constants that determine the overall strength of the attrition and its sensitivity to the distance between the populations.

In contrast to agent-wise attrition models (see, e.g., \cite{tsatsanifos2021}),
the proposed population-wise formulation is inherently averaged and
scalable. This enables modeling of interactions between large
populations of shooters and targets in a tractable manner.

The remaining modeling choice is the selection of the statistical
distance $R(\cdot,\cdot)$. In Fig.~\ref{fig:distances}, we compare several
statistical distances and illustrate their behavior in two scenarios:
 when two populations with Gaussian distributions (with fixed variance)
move closer to each other by varying their centers of mass and when the centers of mass are fixed and the variance of one population
is varied. 
In the first scenario, one Gaussian distribution with variance
$\sigma^2 = 0.85$ moves from $(2,2)$ toward $(0,0)$, while the second
Gaussian remains fixed at $(0,0)$ with variance $\sigma^2 = 0.85$.
In the second scenario, one Gaussian is centered at $(0,0)$ and its
variance increases from $\sigma^2 = 0.1$ to $\sigma^2 = 10$, while the
second Gaussian remains fixed at $(2,2)$ with variance $\sigma^2 = 1.5$.
The first scenario highlights how rapidly each distance decreases as the
centers of mass approach each other, while the second illustrates the
sensitivity of each distance to changes in the shape of the populations.
\begin{figure*}[t]
\centering
\makebox[\textwidth][c]{%
    \hspace{-0.cm}
    \includegraphics[scale=0.3]{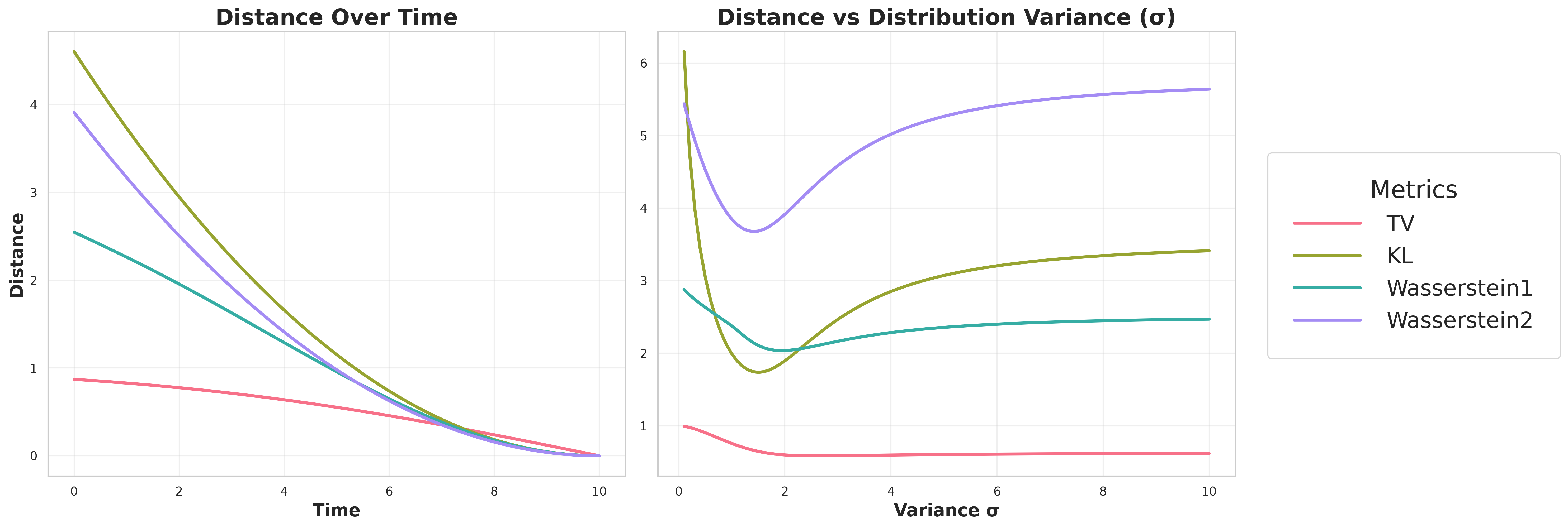}
}
\caption{Comparison of statistical distances.
Left: moving distributions.
Right: varying variance.
}
\label{fig:distances}
\end{figure*}

The intuition suggests that, in the first scenario, the distance should
decrease as the populations approach each other; this behavior is captured
by all considered distances (see the left plot of Fig.~\ref{fig:distances}).
In the second scenario, the situation is more subtle. If the shooter
population is highly concentrated, it may not effectively cover the
target population. When moderately spread, its effectiveness improves,
while excessive dispersion again reduces its effectiveness due to
dilution of its impact.
 This nontrivial behavior is effectively captured by the Wasserstein-2
distance and the Kullback--Leibler divergence (see the right plot of Fig.~\ref{fig:distances}). Motivated by this, and by
the fact that the Wasserstein-2 distance defines a true metric on the
space of probability distributions, we focus our study on the
Wasserstein-2 distance.

\subsection{Optimization Problem}\label{sec:OP}
Following the agent-wise model discussed in Section~\ref{sec:prelim},
we derive the corresponding optimization formulation for the HVU
protection problem at the population level.

In the spirit of the population-wise attrition framework and mean-field
game theory, we consider populations of defenders, attackers, and HVUs,
described through their representative agents with states $S_t$, $X_t$,
and $Y_t$ at time $t$, respectively.
For simplicity of the subsequent analysis, we assume that
$X_t, S_t, Y_t \in \mathbb{T}^d$.
The corresponding distributions are denoted by $m(t)$ and $\mu(t)$ for
the defender and attacker populations, i.e., $S_t \sim m(t)$ and
$X_t \sim \mu(t)$, while the HVU population is static, i.e.,
$Y_t = Y \sim \nu_{\mathrm{H}}$.

The dynamics of the attacker and defender populations are governed by the
following system of stochastic differential equations:
\begin{equation}\label{McKean-Vlasov}
\begin{cases}
\mathrm{d}X_t = u_{\mathrm{att}}\big(t, X_t\big)\,\mathrm{d}t + \sqrt{2\varepsilon}\,\mathrm{d}W_t^X, \\
\mathrm{d}S_t = u\big(t, S_t\big)\,\mathrm{d}t + \sqrt{2\varepsilon}\,\mathrm{d}W_t^S,
\end{cases}
\end{equation}
where $W_t^X$ and $W_t^S$ are independent standard Brownian motions.
We assume that the behavior of the attackers is given, i.e.,
$u_{\mathrm{att}}$ is known, and that attackers pursue a single objective, namely, the destruction
of the HVU population. The defender control $u$ is to be
determined so as to maximize the survival probability of the HVU population.

Let $Q(t)$ denote the survival probability of the attacker population
over the time interval $[0,t]$. Following the classical formulation, we
assume that
\[
Q(t+\Delta t) = Q(t)\big(1 - d^{\mathrm{att}}(\mu(t), m(t))\,\Delta t\big),
\]
where $d^{\mathrm{att}}$ is the population-wise attrition rate of the
attackers, determined by the distributions of the defender and attacker
populations, $m(t)$ and $\mu(t)$, respectively.
Passing to the limit as $\Delta t \to 0$, we obtain
$\dot{Q}(t) = -Q(t)\, d^{\mathrm{att}}(\mu(t), m(t))$,
which yields
\begin{equation}
    \label{eq-Q-exp}
Q(t) = \exp\!\left(-\int_0^t d^{\mathrm{att}}(\mu(\tau), m(\tau))\,d\tau\right).
\end{equation}
Similarly, let $P(t)$ denote the survival probability of the HVU population
over the time interval $[0,t]$, and let $d^{H}$ denote the corresponding
attrition rate. Then,
\[
\dot{P}(t) = -P(t)\, d^{H}(\nu_{\mathrm{H}}, \mu(t))\, Q(t),
\]
where $\nu_{\mathrm{H}}$ stands for the distribution of HVU.
This implies
\begin{equation}\label{eq:-P-exp}
    P(t) = \exp\!\left(-\int_0^t d^{H}(\nu_{\mathrm{H}}, \mu(\tau))\,
Q(\tau)\, d\tau\right).
\end{equation}
Recalling the explicit form of $Q(t)$ in \eqref{eq-Q-exp}, we obtain
$$
P(t) = e^{-\bigints_0^t d^{H}(\nu_{H}(\tau), \mu(\tau))\cdot e^{-\bigintsss_0^\tau d^{att}(\mu(\theta), m(\theta))d\theta}d\tau}.
$$
Thus, the defender population seeks to minimize
\begin{equation}\label{main_minimizer}
\min_{u} \; \mathbb E\!\left[\big(1 - P(T)\big)\right].
\end{equation}
Since the exponential function is monotone, the above problem is
equivalent to
\begin{equation}
\begin{split}
   & \min_{u} \; \mathbb E\!\left[\big(1 - P(T)\big)\right]
= \max_{u} \; \mathbb E\!\left[P(T)\right]
\\&= \max_{u} \;\mathbb E\!\left[ \log P(T)\right]
= \min_{u} \; \mathbb E\!\left[\big(-\log P(T)\big)\right].
\end{split}
\end{equation}
In addition, we penalize the control effort. For a given parameter
$\alpha \in (0,1)$, we consider the following weighted objective:
$\min_{u} \; \mathbb E\!\left[
(1-\alpha)\big(-\log P(T)\big)
+ \alpha \int_0^{T} u^2 \, d\tau
\right]$.
Using this formulation together with \eqref{eq:-P-exp}, we obtain the
following optimization problem:
find control $u$ minimizing
\begin{equation}\label{minimizer_revised-v}
 \mathbb E\!\left[\int_0^{T}(1-\alpha)d^{H}_\mu(\tau) Q(\tau)+\alpha u^2d\tau\right],
\end{equation}
subject to \eqref{McKean-Vlasov}.

This is a mean-field (McKean--Vlasov) control problem, where the cost
depends on the distribution $m(t,\cdot)$. In the standard dynamic
programming formulation, the value function depends on both the state
and the law, $U(t,x,m)$, leading to an infinite-dimensional
Hamilton--Jacobi equation (the master equation) on
$[0,T]\times\mathbb{T}^d\times\mathcal{P}_2(\mathbb{T}^d)$.
This formulation is analytically delicate and is typically computationally
intractable. Therefore, in the next section, we adopt a representative-agent
reduction, which yields a mean-field game (MFG) system that is both
analytically and numerically tractable.

\vspace{-0.3cm}
\section{Mean-Field Game Model}
In this section, we derive the mean-field game attacker--defender model
and establish two-sided bounds for the defender density, ensuring
regular mass evolution.

\subsection{Model Derivation}\label{sec:MFG-deriv}
We denote by $\mathcal{P}_2(\mathbb{T}^d)$ the space of Borel probability
measures on $\mathbb{T}^d$ with finite second moment, and by $W_2$ the
Wasserstein-$2$ metric (see, for example,
\cite{villani2009wasserstein}).
As mentioned before, we employ this distance to quantify interactions
between populations; that is,
\vspace{-0.1cm}
$$R(\sigma_s,\sigma_t) = W_2(\sigma_s,\sigma_t).$$\vspace{-0.1cm}
Let us define
\vspace{-0.1cm}
\[
D(t,\nu_{H}) = (1-\alpha)\, d^{H}\!(W_2\big(\nu_{\mathrm{H}}, \mu(t)\big)),
\]\vspace{-0.1cm}
and set $E(t,m,\mu)=Q(t)$, where $Q(t)$ is given by \eqref{eq-Q-exp}.
To characterize the decentralized optimal strategy for the defender
population corresponding to \eqref{minimizer_revised-v}, we follow the
mean-field game framework (see
\cite{cardaliaguet2019master,carmona2018probabilistic} and references therein).
In particular, we isolate the macroscopic term appearing in the cost functional and denote it by
\[
\mathcal{F}[m]=D(t,\nu_{H})\cdot E(t,m,\mu).
\]
To capture how an individual agent influences the macroscopic cost, we
differentiate the macroscopic part of the cost, $\mathcal{F}[m]$, with respect to the measure $m$.
Since the Wasserstein space $\mathcal{P}_2(\mathbb{T}^d)$ lacks a linear
structure, we employ the Lions derivative. 
Note that 
\begin{equation}\label{eq:Fdem}
    \frac{\delta \mathcal{F}}{\delta m}[m](x)
= D(t,\nu_{H})\,  \frac{\delta}{\delta m}E(t, m,\mu),
\end{equation}
and recalling \eqref{eq-Q-exp}, we have
\begin{equation*}
    \begin{split}
         &\frac{\delta}{\delta m}E(t, m,\mu) =  -E(t, m,\mu)\times \\&\times (d^{att})'(W_2(\mu(s), m(s)))\cdot\frac{\delta}{\delta m}[W_2^2(\mu, m)].
    \end{split}
\end{equation*}
Hence, the functional $\mathcal{F}[m]$ is Lions differentiable.

For a given population flow $m(t)$ and a point $x\in\mathbb T^d$,
we perturb the defender distribution $m(t)$ by inserting an infinitesimal
amount of mass at $x$. More precisely, for $\varepsilon\in(0,1)$, we set
\[
m^\varepsilon (t):= (1-\varepsilon)m(t)+\varepsilon\delta_x,
\]
where $\delta_x$ denotes the Dirac mass at $x$. Since $m^\varepsilon(t)$ is a
convex combination of two probability measures, we have
$m^\varepsilon(t)\in\mathcal P(\mathbb T^d)$.
By the first-order expansion of $\mathcal F$ with respect to the measure
variable, we have
\[
\mathcal F[m^\varepsilon(t)]-\mathcal F[m(t)]
=
\varepsilon\left(
\frac{\delta \mathcal F}{\delta m}[m(t)]-c(t)
\right)+o(\varepsilon),
\]
where
\[
c(t):=\int_{\mathbb T^d}
\frac{\delta \mathcal F}{\delta m}[m(t)](y)\,m(t)(dy).
\]
At the macroscopic level, $c(t)$ depends on the population flow and 
implicitly on the control. However, in the representative-agent setting (mean-field game framework),
the population flow $m(t)$ is frozen and treated as prescribed.
Therefore, $c(t)$ becomes a time-dependent term independent of the state
and control of the individual agent, and so it does not affect the
minimizer.

Thus, we have 
the following minimization problem for representative defender at $x$:
\begin{equation}
\label{eq:rep_agent_problem}
\inf_{u}\;
\mathbb E\!\left[
\int_0^T
L(t,X(t),\nu_{H}, \mu, m, u)
\,dt
\right],
\end{equation}
subject to \eqref{McKean-Vlasov}, where
\begin{equation*}
    L(t, x,\nu_{H}, \mu, m, u) =D(t,\nu_{H})\cdot\frac{\delta}{\delta m}E(t, m,\mu)+\alpha u^2.
\end{equation*}
The  corresponding Hamiltonian is given by
\begin{multline}\label{mfg: Hamilton}
        H(t,x,\nu_{H}, \mu, m, p) =\underset{q}{max}(-L( t,x, \nu_{H}, \mu, m, q) - q\cdot p)\\=\frac{p^2}{4\alpha}-D(t,\nu_{H})\cdot\frac{\delta}{\delta m}E(t, m,\mu).
\end{multline}
Denoting
\begin{equation*}
     F(t,x, \nu_{H}, \mu, m) =\frac{\delta \mathcal{F}}{\delta m}[m](x),
\end{equation*}
we derive the system
 of MFG attacker-defender model.
\begin{problem}\label{prob-MFG}
Given the distribution of HVU population, $\nu_{H}$, and the state evolution of the attacker distribution $\mu(t)$,  find $(w,m)$ such that it solves the following coupled system of PDE's 
\begin{equation}\label{eq:MFG}
       \begin{cases}
        -w_t-\varepsilon\Delta w + \frac{|Dw|^2}{4\alpha}=F(t,x, \nu_{H},\mu, m)\\
         m_t -\varepsilon\Delta m-\frac{1}{2\alpha}\div_x(Dw\cdot m)=0\\
        \mu_t - \varepsilon\Delta \mu -\div_x(u_{att}\cdot \mu)=0,\\
         w(T,x) = 0,\; m(0,x)=m_0, \; x\in\T^d,
\end{cases}
\end{equation}
for $(t,x)\in(0,T)\times \T^d.$
\end{problem}

\subsection{Flow Bounds}\label{sec:flowb}
This section is devoted to the qualitative analysis of the MFG model. In
particular, we establish uniform upper and lower bounds for the density
of the defender population, ensuring regular mass evolution over time.
\begin{theorem}
\label{thm:fp_bounds}
Let $T > 0$ and let $(w,m)$ be a classical solution to Problem \ref{prob-MFG},
with initial condition $m(0, \cdot) = m_0$ such that $0 < \min_{\mathbb{T}^d} m_0 \le \max_{\mathbb{T}^d} m_0 < \infty$.
Then, for all $(t,x) \in [0,T] \times \mathbb{T}^d$, the solution $m$ satisfies the following bounds  
\begin{equation*}
e^{-Kt}\min_{\mathbb{T}^d} m_0\le m(t,x) \le e^{Kt}\max_{\mathbb{T}^d} m_0 ,
\end{equation*}
where 
$K= \frac{1}{2\alpha}\underset{t\in[0,T]}{\sup}\|\Delta u\|_{L^\infty(\T^d)}.$
\end{theorem}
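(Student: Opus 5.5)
The bounds follow from the maximum principle applied to the Fokker--Planck equation in \eqref{eq:MFG}, written in non-divergence form. We read the constant $K$ with $u$ denoting the value function $w$ (equivalently, the optimal feedback is $-Dw/(2\alpha)$). This gives $K=\frac{1}{2\alpha}\sup_{t\in[0,T]}\|\Delta w(t,\cdot)\|_{L^\infty(\T^d)}$, which is finite because $(w,m)$ is classical on the compact set $[0,T]\times\T^d$. Expanding the divergence term gives
\[
m_t-\varepsilon\Delta m-\frac{1}{2\alpha}Dw\cdot Dm-\frac{1}{2\alpha}\Delta w\, m=0 .
\]
This is a linear parabolic equation for $m$ with bounded drift $b=\frac{1}{2\alpha}Dw$ and a bounded zeroth-order coefficient $c=\frac{1}{2\alpha}\Delta w$ satisfying $|c|\le K$.

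\textbf{Upper bound.} Set $v(t,x)=e^{-Kt}m(t,x)$. Then
\[
v_t-\varepsilon\Delta v-b\cdot Dv=(c-K)v .
\]
Take $\eta>0$ and put $v_\eta=v-\eta t$. Suppose $v_\eta$ attains its maximum over $[0,T]\times\T^d$ at a point $(t_0,x_0)$ with $t_0>0$ and $v_\eta(t_0,x_0)>\max m_0$. Since $\T^d$ has no boundary, at this point $Dv=0$, $\Delta v\le0$ and $\partial_t v\ge \eta>0$. On the other hand $v>0$ there and $c-K\le0$, so the right-hand side $(c-K)v$ is $\le0$, while the left-hand side is $\ge\eta>0$. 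This is a contradiction. Hence $v_\eta\le \max m_0$ everywhere, and letting $\eta\to0$ gives $m\le e^{Kt}\max m_0$.

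\textbf{Lower bound.} Set $z=e^{Kt}m$. Then
\[
z_t-\varepsilon\Delta z-b\cdot Dz=(c+K)z ,
\]
with $c+K\ge0$. First we show $m$ cannot become negative. Apply the argument above to the minimum of $z+\eta t$. Suppose the minimum is attained at $t_0>0$ with value below $\min m_0>0$, and let $t_0$ be the first such time. If $z(t_0,x_0)\ge0$, then $(c+K)z\ge0$, while at the minimum point $\partial_t z\le-\eta<0$, $Dz=0$ and $\Delta z\ge0$, which is a contradiction. The case $z(t_0,x_0)<0$ cannot occur first, by continuity in time starting from $z(0)=m_0>0$: the minimum would pass through the admissible range and already be excluded there. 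Letting $\eta\to0$ gives $z\ge \min m_0$, that is, $m\ge e^{-Kt}\min m_0$. If cleaner, positivity of $m$ can instead be taken from the strong maximum principle for linear parabolic equations and then used directly.

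\textbf{Main obstacle.} The key structural point is that $K$ must control the zeroth-order coefficient $\frac{1}{2\alpha}\Delta w$. This needs $\Delta w$ to be bounded uniformly in time, which the classical-solution hypothesis provides on the compact domain. The rest is routine: the periodic maximum principle with the $\eta t$ perturbation, and a continuity argument for the sign of $m$ in the lower bound.
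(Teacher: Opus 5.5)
Your argument is correct and is essentially the paper's: both rewrite the Fokker--Planck equation in non-divergence form, evaluate it at spatial extrema of $m$ (where $Dm=0$ and $\Delta m$ has a sign), and bound the zeroth-order coefficient $\frac{1}{2\alpha}\Delta w$ by $K$, reading $u$ as $w$ exactly as the paper's proof does. The paper applies Gr\"onwall's inequality to $t\mapsto\max_x m(t,x)$ and $t\mapsto\min_x m(t,x)$, whereas your rescaling by $e^{\mp Kt}$ with the $\eta t$ perturbation avoids differentiating these extremal functions and makes explicit the nonnegativity of $m$, which the paper uses tacitly in its lower-bound step $-\frac{1}{2\alpha}\Delta w\, l\ge -Kl$.
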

\begin{proof}
Since $w\in C^{1,2}([0,T]\times \T^d)$, then 
 $\Delta w$ is continuous. Hence, there exists a positive constant $K$ such that 
 \begin{equation*}
     \frac{1}{2\alpha} \|\Delta w\|_{L^\infty([0,T]\times\T^d)}\le K.
 \end{equation*}
 
Because $m \in C^{1,2}([0,T]\times\mathbb{T}^d)$ is a classical solution
to the Fokker--Planck equation (the second equation in \eqref{eq:MFG}),
for each $t\in[0,T]$ the function $x \mapsto m(t,x)$ attains its maximum
at some point $x_t^M \in \mathbb{T}^d$. Let $L(t) = \max_{x \in \mathbb{T}^d} m(t,x)=m(t,x_t^M)$. Since the system is defined  on the boundary-free torus, then,  we have that all  points are strictly interior. Therefore,  $\nabla m(t, x^M_t) = 0$ and $-\Delta m(t, x^M_t) \ge 0$.

The Fokker-Planck equation can be rewritten in non-divergence form as follows
\begin{equation}\label{eq:used}
       m_t - \varepsilon\Delta m + \frac{1}{2\alpha}\nabla w \cdot \nabla m = - \frac{1}{2\alpha}m\Delta w .
\end{equation}

Evaluating the equation at the maximum point $x_t^M$, and using the
previous observations, we obtain
\begin{align*}
    \dot{L}(t) \le - \frac{1}{2\alpha}(\Delta w)(t, x_t^M)\cdot L(t) 
    \le  \frac{1}{2\alpha}\|\Delta w\|_\infty\cdot L(t) 
    \le K\cdot L(t).
\end{align*}
By Grönwall's inequality, we obtain the upper bound $$L(t) \le L(0)e^{Kt}.$$

Similarly, letting $l(t) = \min_{x \in \mathbb{T}^d} m(t,x)$ at a minimum point $x^m_t$, we have $\nabla m(t, x^m_t) = 0$ and $-\Delta m(t, x^m_t) \le 0$. Along with the FP equation in divergence form \eqref{eq:used}, we get
\begin{align*}
    \dot{l}(t)\ge - \frac{1}{2\alpha}(\Delta w)(t, x^m_t)\cdot l(t) 
    \ge - \frac{1}{2\alpha}\|\Delta w\|_\infty\cdot l(t) 
    \ge -K\cdot l(t).
\end{align*}
Again by Grönwall's inequality, we get the lower bound $$l(t) \ge l(0)\cdot e^{-Kt}.$$
\end{proof}

\addtolength{\textheight}{-3cm}   
 
\section{Simulations}\label{sec:sim}
In this section, we describe the numerical approach used to solve the MFG
attacker--defender system in \eqref{eq:MFG}, and present illustrative
scenarios highlighting the ability of the model to capture key emergent
behaviors.

\begin{figure*}[!htbp]

\centering
\vspace{-0.2cm} 

\subfloat{%
\includegraphics[width=0.33\textwidth]{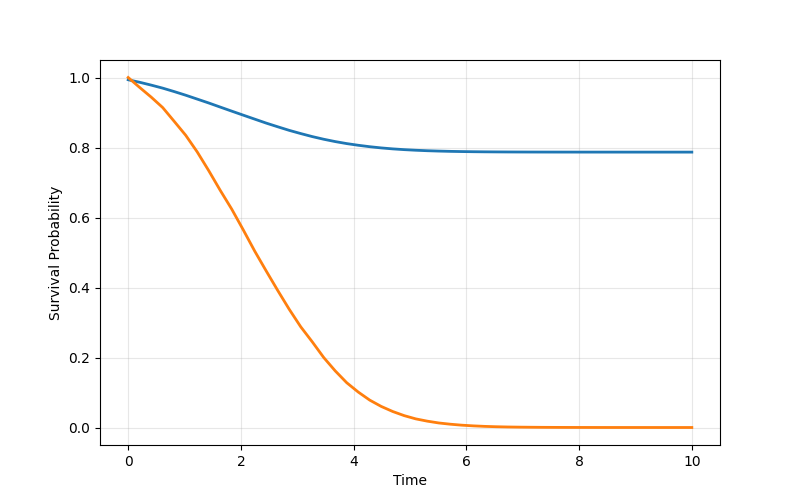}}
\hfill
{%
\includegraphics[width=0.33\textwidth]{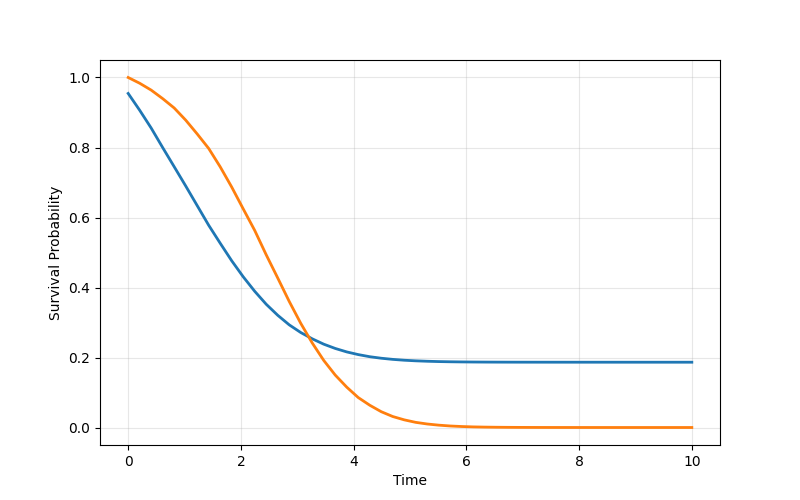}}
\hfill
{%
\includegraphics[width=0.33\textwidth]{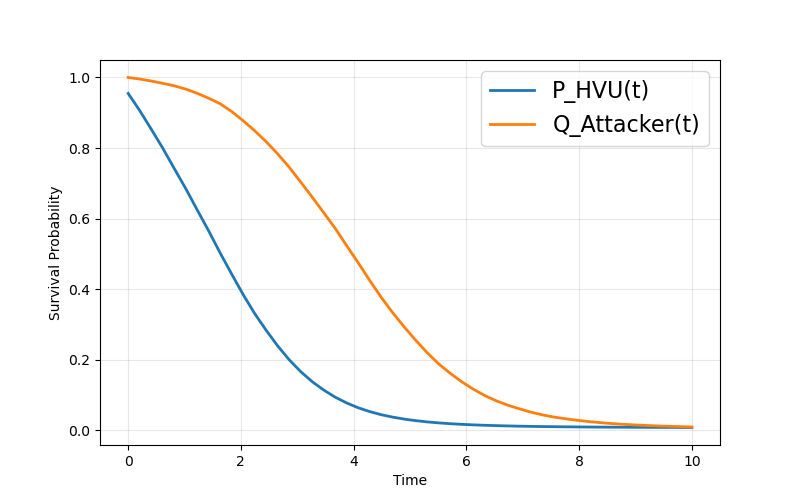}}

\caption{The blue curve represents the survival probability of the HVU, while the orange curve corresponds to the attackers.The initial position of the defenders is $(-3,-3)$  with variance $\sigma^2 = 0.85$.
The attackers start from $(-4,4)$ with variance $\sigma^2 = 0.85$ and move toward $(1,1)$, where the HVU is located with variance $\sigma^2 = 0.1$. For the first panel, the attrition rate parameters are $\sigma_A = \sigma_H = 5,\ \lambda_A = 14,\ \lambda_H = 1$. 
For the second panel, $\sigma_A = \sigma_H = 5,\ \lambda_A =\lambda_H = 7$. 
For the third panel, $\sigma_A = \sigma_H = 5,\ \lambda_A = 2,\ \lambda_H = 7$.}
\label{surv_won_lose}
\vspace{-0.5cm}
\end{figure*}

\begin{figure*}[!htbp]
\centering
\subfloat{%
\includegraphics[width=0.33\textwidth]{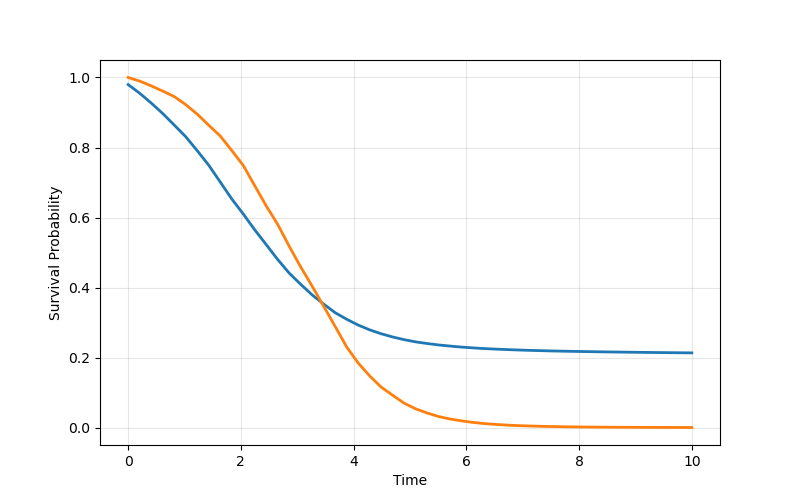}}
\hfill
\subfloat{%
\includegraphics[width=0.33\textwidth]{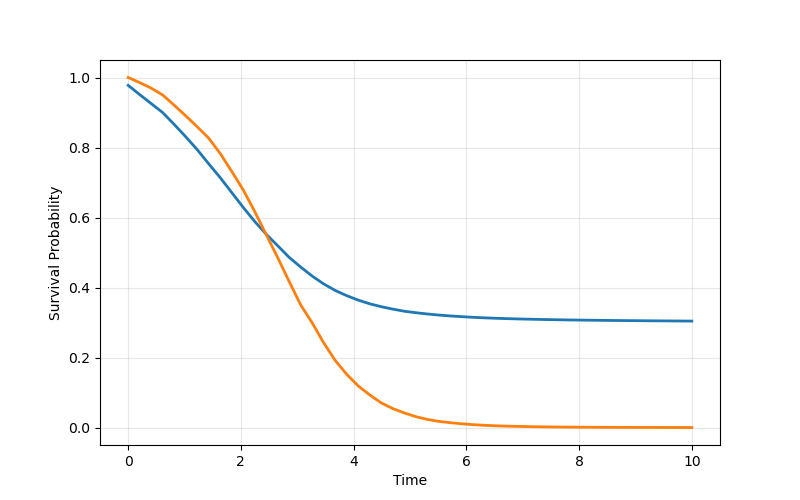}}
\hfill
{%
\includegraphics[width=0.33\textwidth]{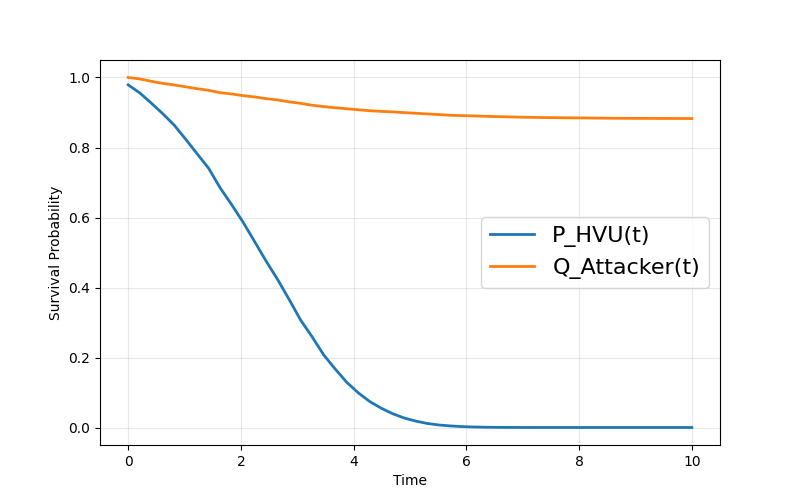}}
\caption{Comparison of survival probabilities for three values of the variance of
the initial defender distribution $m_0$:
$\sigma^2=0.35,\,1.4,\,$ and $1.8$. The defenders are initially centered
at $(0,0)$, while the attackers start from $(-4,0)$ with variance
$\sigma^2=1.5$ and move toward $(-4,4)$, where the HVU is located with
variance $\sigma^2=0.2$. The attrition rate parameters are
$\sigma_A=\sigma_H=2$ and $\lambda_A=3$, $\lambda_H=10$.}
\label{surv_variances}
\vspace{-0.6cm}
\end{figure*}
\vspace{-0.4cm}
\subsection{Numerical Method and Simulation Setup}

A key challenge in numerically solving the proposed MFG system lies in
the computation of the Wasserstein-$2$ distance, which is
computationally demanding and must be evaluated at each time step. To
mitigate this difficulty, we approximate the Wasserstein-$2$ distance
using the Sinkhorn algorithm, see \cite{cuturi2013sinkhorn}. The resulting problem is then solved using
physics-informed neural networks (PINNs), see \cite{pinn}.

We consider a two-dimensional setting, with spatial domain
$\Omega = [-5,5]^2$ and time horizon $T=10$.  The spatial domain is discretized using a uniform
$60 \times 60$ grid, while the time interval is sampled uniformly with
$50$ points. These collocation points are used to train the PINN model. In our model, we set the
diffusion coefficient to $\varepsilon = 0.001$ and the Pareto weight to
$\alpha = 0.1$.
As $d^{att}$ and $d^{H}$ functions, we take $$d^{att}\left(\theta_1, \theta_2\right)=\lambda_A\cdot \exp\left(-\tfrac{W_2^2(\theta_1, \theta_2)}{\sigma_A}\right)$$ 
and 
$$d^{H}(\theta_1, \theta_2)=\lambda_H\cdot \exp\left(-\tfrac{W_2^2(\theta_1, \theta_2)}{\sigma_H}\right).$$

 As mentioned before, to numerically solve the mean-field game system,
we first approximate the MFG system \eqref{eq:MFG} by replacing the
Wasserstein-$2$ distance with its Sinkhorn approximation. We then solve
the resulting approximated problem using PINNs. In this approach, the loss function is built from the residuals of the coupled Hamilton–Jacobi and Fokker–Planck equations. It also includes the terminal and initial conditions of the problem.
We use two separate neural networks: one for the value function 
$w(t,x,y)$ and one for the density $m(t,x,y)$. Both networks have the same ResNet architecture. It consists of an input layer with three neurons, followed by a fully connected hidden layer with 128 neurons and Tanh activation function. This is followed by four residual blocks, each composed of two fully connected layers with 128 neurons, Tanh activation functions, and a residual connection. The network ends with a final linear output layer with one neuron. For the density network, we apply Softplus activation function. 
We do not use labeled data for training. Instead, the networks learn by minimizing the equation errors. Trainings were conducted for 15,000 epochs with the Adam optimizer, utilizing NVIDIA GeForce RTX 4080 and 5080 GPUs.

\vspace{-0.2cm}
\subsection{Simulation Scenarios}
We consider two classes of simulations. First, by varying the parameters
of the attrition rate functions, we demonstrate the model’s ability to
capture different weapon strengths and their impact on the outcome.
Second, we perform simulations with varying initial spreads of the
defender population, illustrating how population dispersion influences
the effectiveness of fire and the resulting outcomes.

Fig.~\ref{surv_won_lose} illustrates the impact of the attrition rate
parameters on the survival probabilities of the HVU and the attackers.
In all three scenarios (first, second, and third panels), the defenders,
attackers, and HVU share the same initial positions and variances, and
the attacker dynamics are identical. The only difference lies in the
attrition rate parameters. 
In the first panel, the defenders are stronger than the attackers; in
the second, both sides have equal strength; and in the third, the
attackers are stronger than the defenders. This results in high,
moderate, and low survival probabilities for the HVU population, respectively, and
conversely for the attackers.

\begin{figure*}[htbp!]
\vspace{-0.5cm}

\centering

\subfloat[\centering Top-Down View $t = 0.00$]{%
\includegraphics[width=0.25\textwidth]{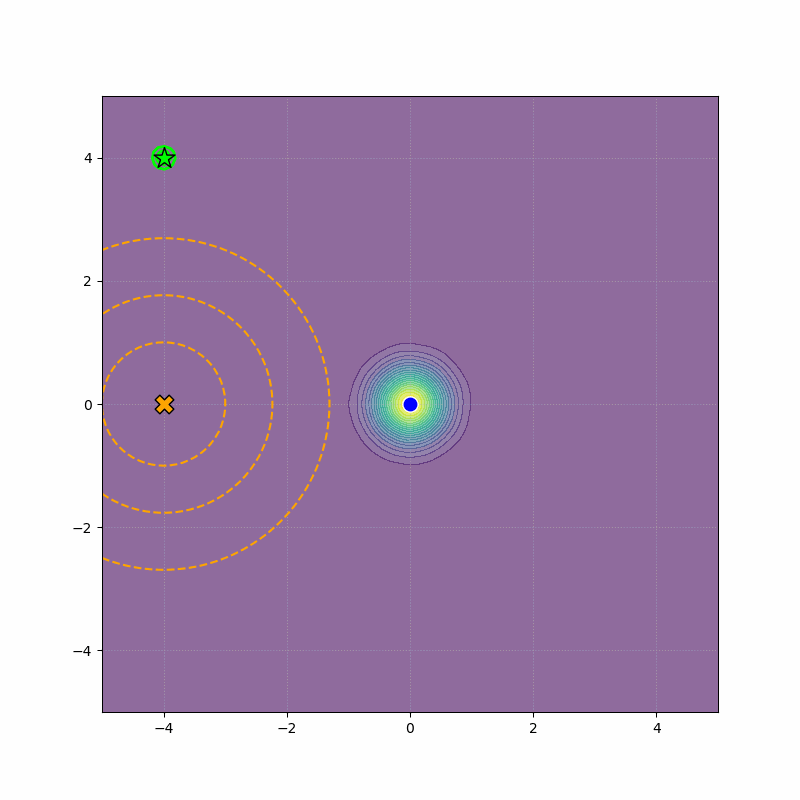}}
\hfill
\subfloat[\centering Top-Down View $t = 3.27$]{%
\includegraphics[width=0.25\textwidth]{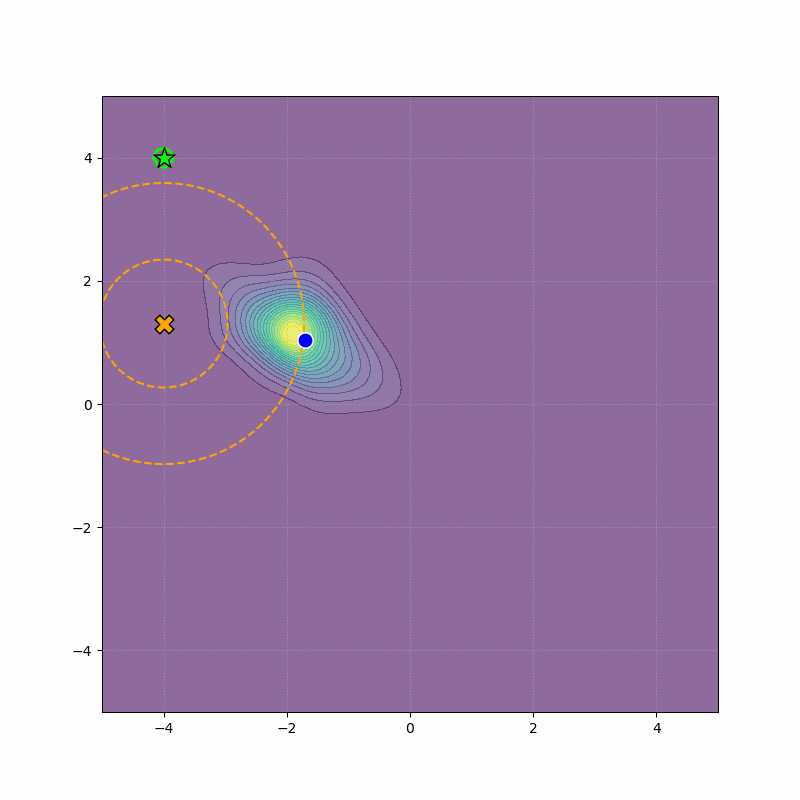}}
\hfill
\subfloat[\centering Top-Down View $t = 6.53$]{%
\includegraphics[width=0.25\textwidth]{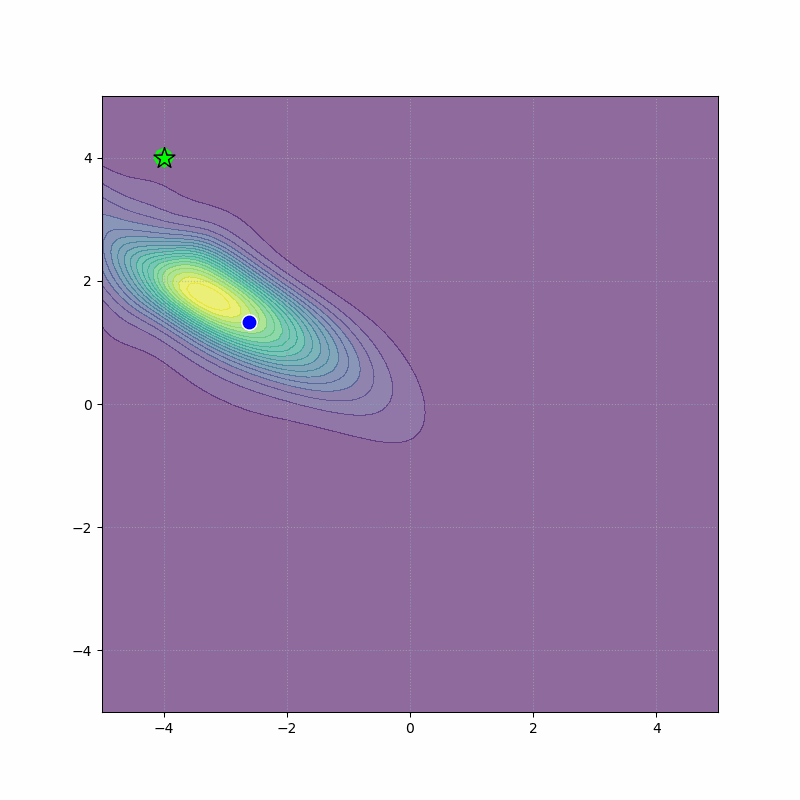}}
\hfill
\subfloat[\centering Top-Down View $t = 10.00$]{%
\includegraphics[width=0.25\textwidth]{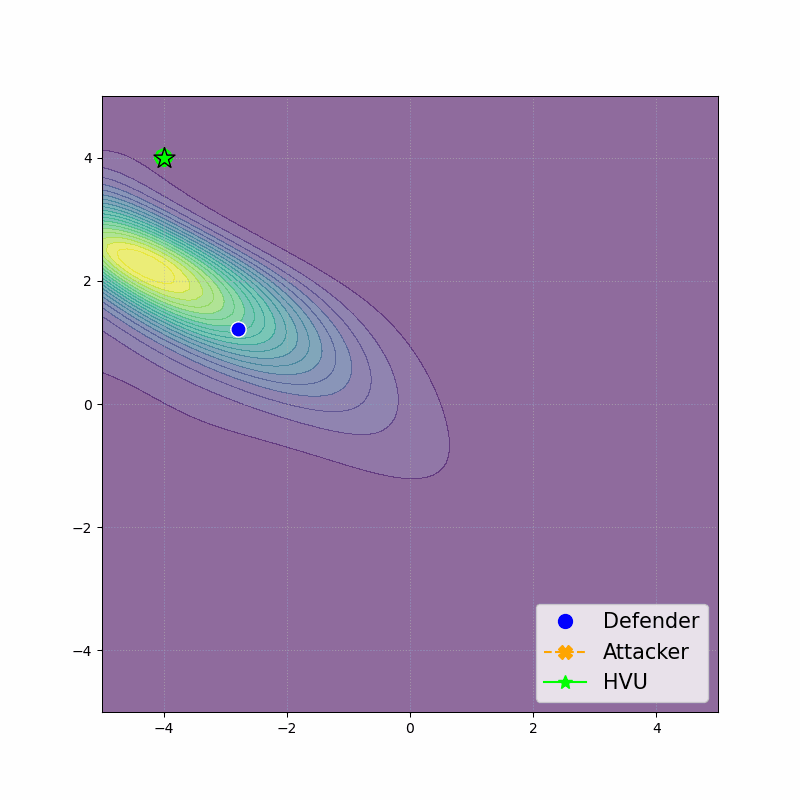}}

\caption{Top-down view of the evolution of the attacker and defender over time of first scenario ($\sigma^2 = 0.35$) of Fig. \ref{surv_variances}.
}
\vspace{-0.4cm}
\label{0.35}
\end{figure*}
\begin{figure*}[!htbp]
\centering
\subfloat[\centering Top-Down View $t = 0.00$]{%
\includegraphics[width=0.25\textwidth]{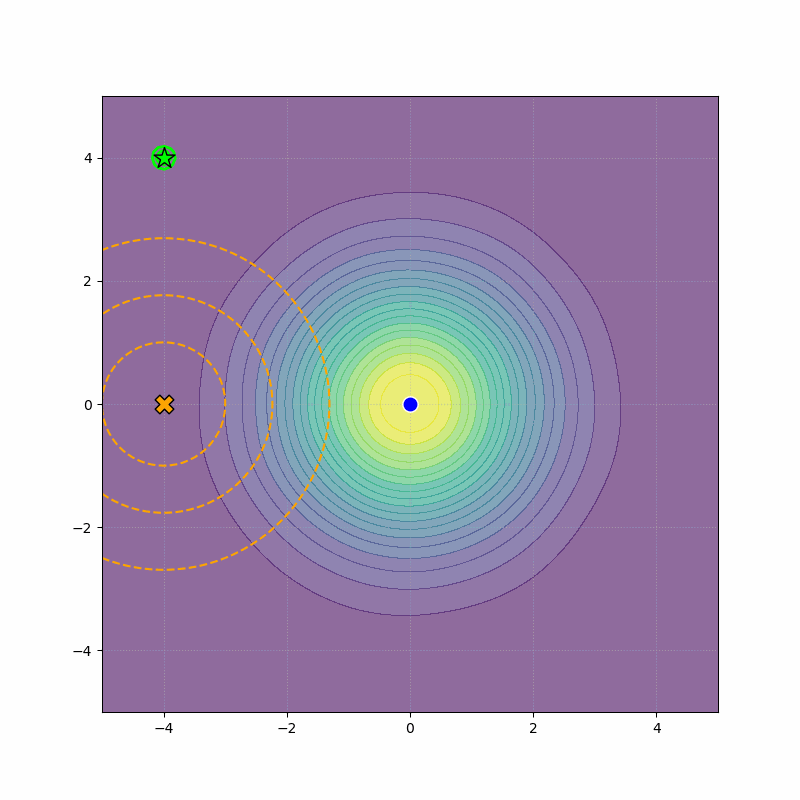}}
\hfill
\subfloat[\centering Top-Down View $t = 3.27$]{%
\includegraphics[width=0.25\textwidth]{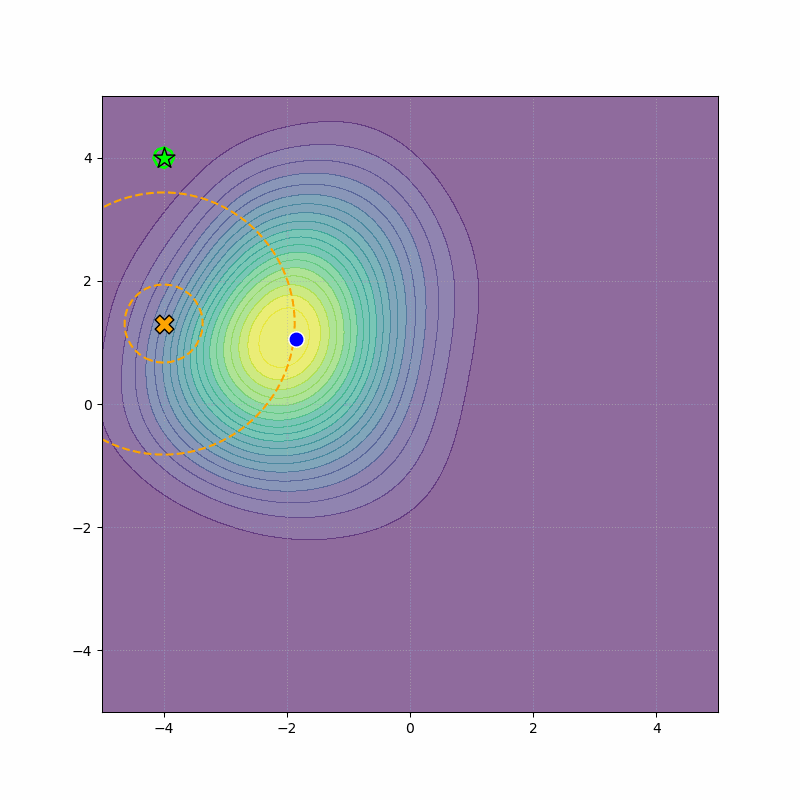}}
\hfill
\subfloat[\centering Top-Down View $t = 6.53$]{%
\includegraphics[width=0.25\textwidth]{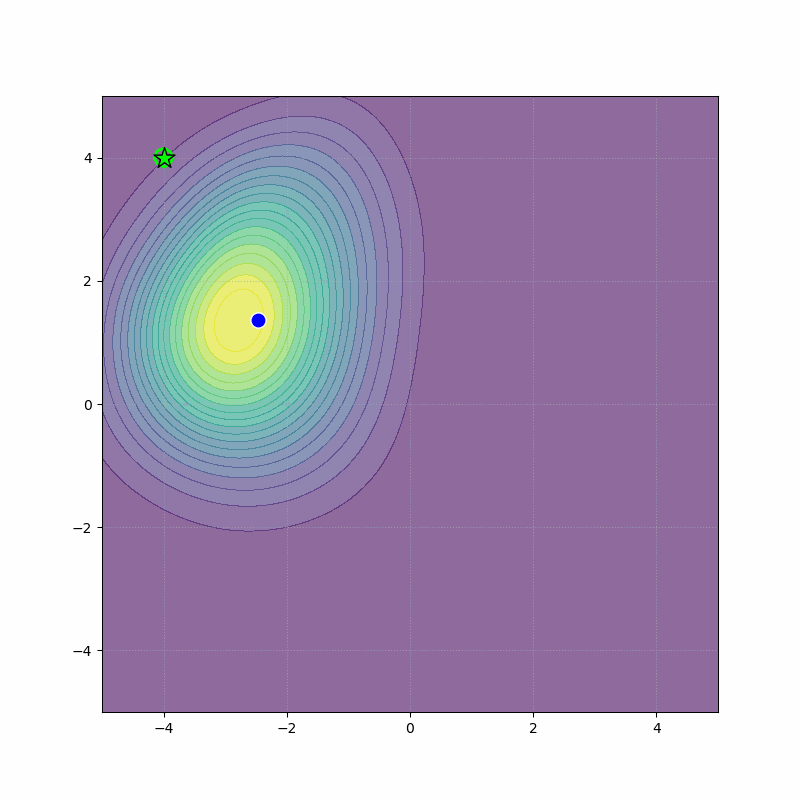}}
\hfill
\subfloat[\centering Top-Down View $t = 10.00$]{%
\includegraphics[width=0.25\textwidth]{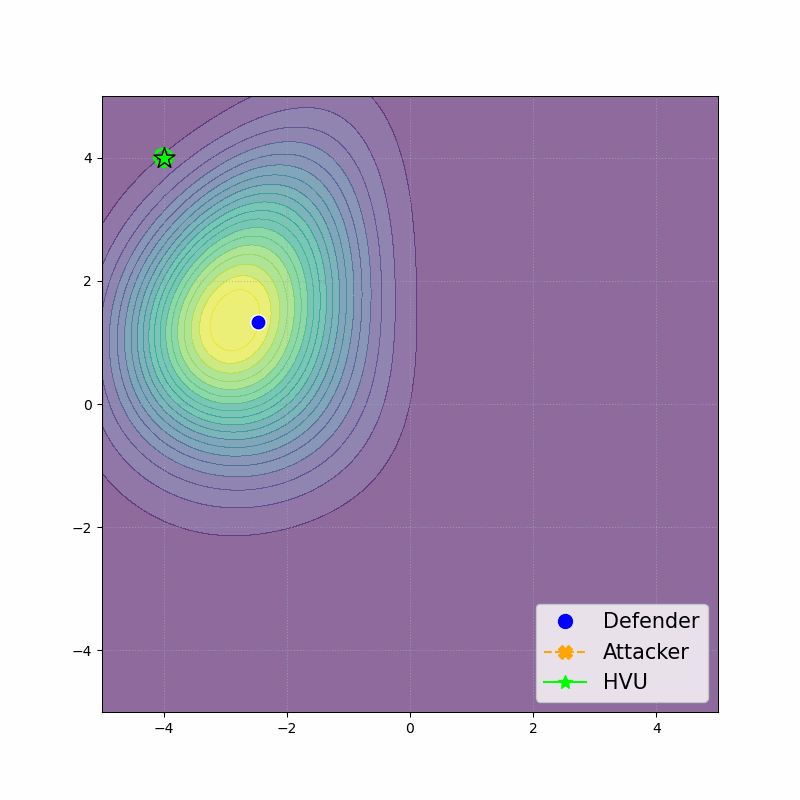}}
\caption{Top-down view of the evolution of the attacker and defender over time
for the second scenario ($\sigma^2 = 1.4$) in Fig.~\ref{surv_variances}.}
\label{1.4}
\end{figure*}
\begin{figure*}[!htbp]
\vspace{-0.4cm}
\centering

\subfloat[\centering Top-Down View $t = 0.00$]{%
\includegraphics[width=0.25\textwidth]{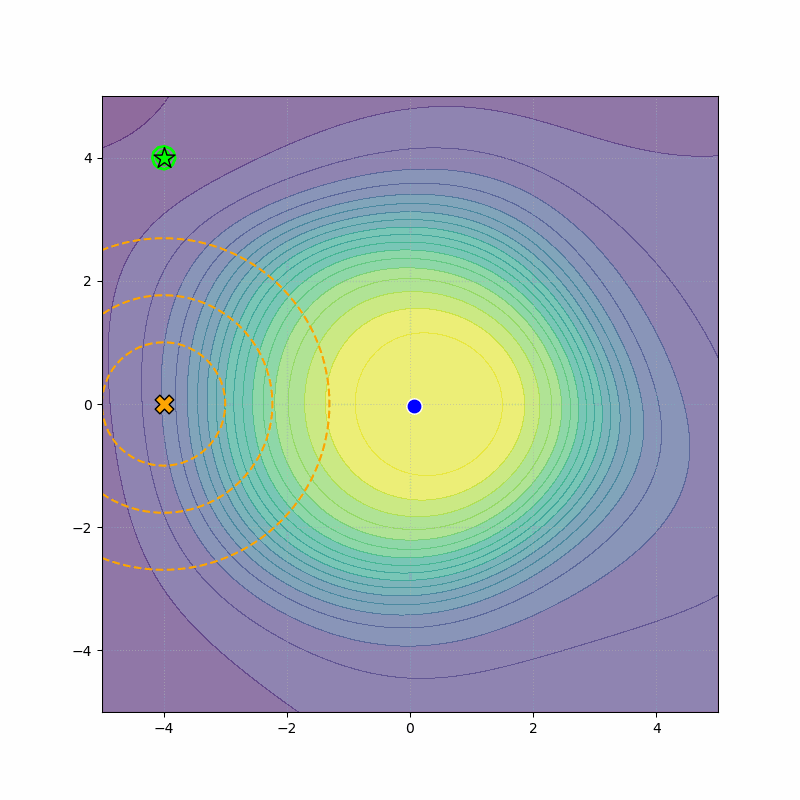}}
\hfill
\subfloat[\centering Top-Down View $t = 3.27$]{%
\includegraphics[width=0.25\textwidth]{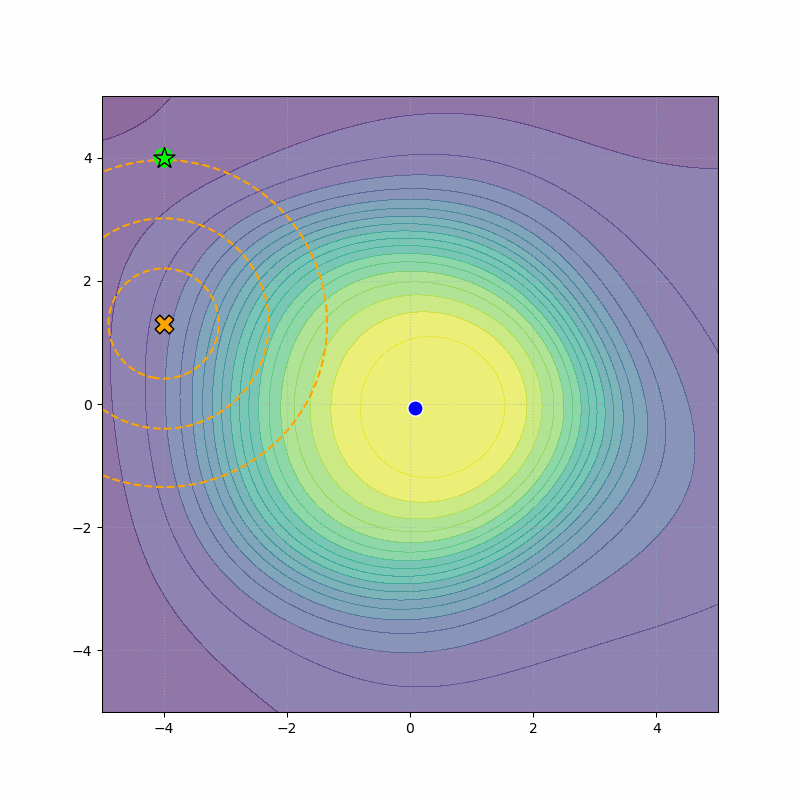}}
\hfill
\subfloat[\centering Top-Down View $t = 6.53$]{%
\includegraphics[width=0.25\textwidth]{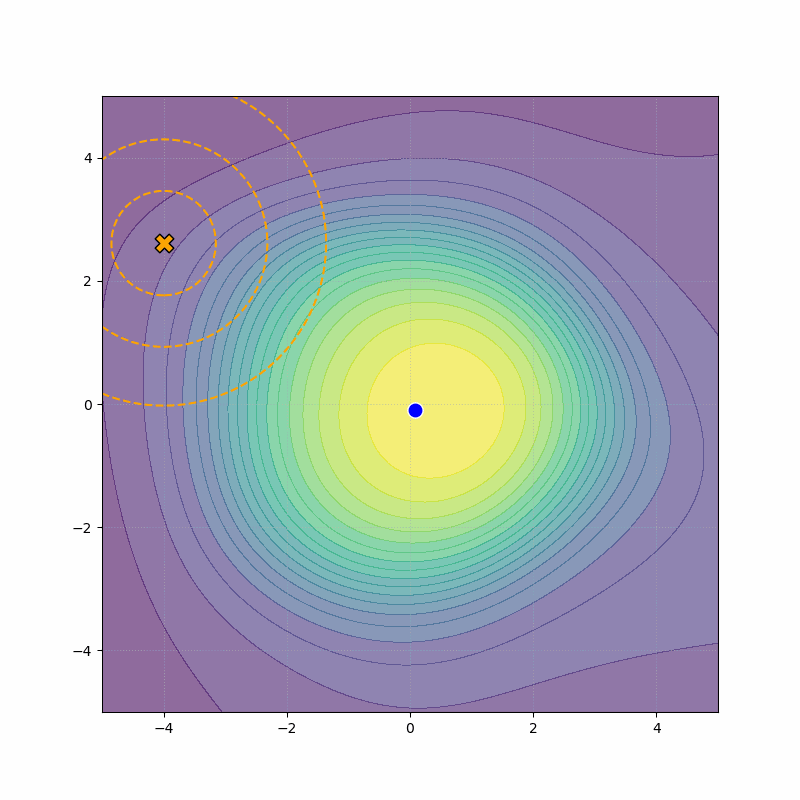}}
\hfill
\subfloat[\centering Top-Down View $t = 10.00$]{%
\includegraphics[width=0.25\textwidth]{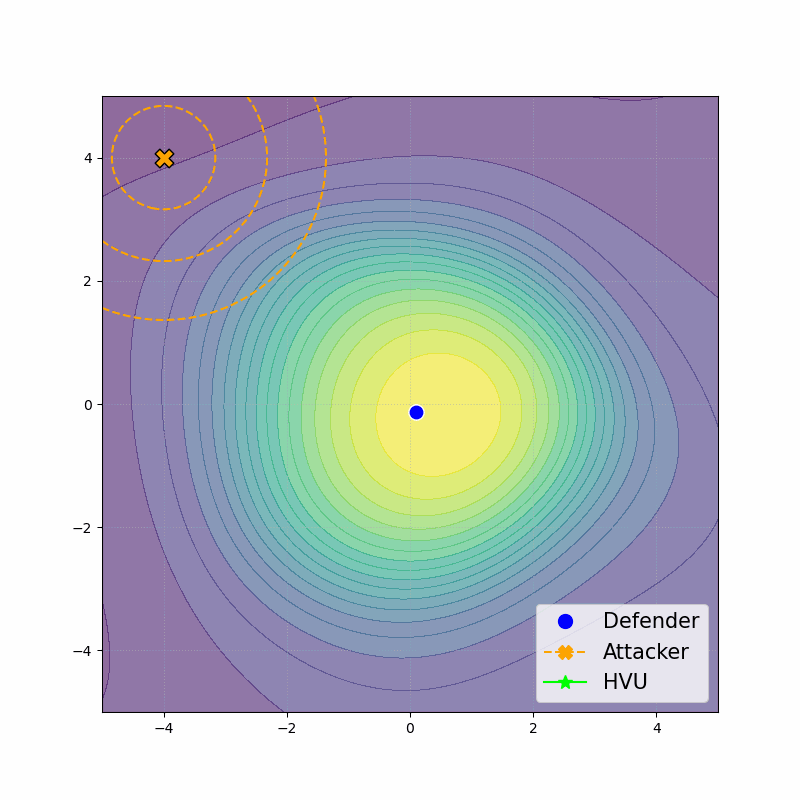}}

\caption{Top-down view of the evolution of the attacker and defender over time
for the third scenario ($\sigma^2 = 1.8$) in Fig.~\ref{surv_variances}.}
\label{2.5}
\end{figure*}

Figures \ref{0.35}, \ref{1.4}, and \ref{2.5} show the evolution of the attacker and defender populations over time for different values of the defender distribution variance. Fig.~\ref{surv_variances} shows the corresponding survival probabilities of the HVU and the attackers.
These results illustrate the impact of the defenders initial distribution variance on the model outcome.

According to  Fig. \ref{fig:distances}, due to smallness of  the statistical distance between the attacker and defenders, the defenders are most effective when their distribution is moderately spread $(\sigma^2 = 1.4)$, as shown in Fig.~\ref{1.4}. This behavior is also reflected in the survival probability plots in Fig.~\ref{surv_variances} (second panel).
For the case with small variance $(\sigma^2 = 0.35)$, the defenders start in a highly concentrated distribution. However, during the simulation they gradually spread and move toward a more favorable distribution, as shown in Fig.~\ref{0.35}. As a result, the HVU still survives, although with a lower probability, as seen in Fig.~\ref{surv_variances} (first panel), since the defenders are not optimally distributed during the entire time interval.
In contrast, when the initial variance is large $(\sigma^2 = 2.5)$, the defenders start too dispersed, as shown in Fig.~\ref{2.5}. In this case, the simulation time of 10 seconds is not enough for the density to reorganize and become concentrated enough. As a result, the HVU is destroyed, as seen in Fig.~\ref{surv_variances} (third panel).

The different behaviors in Fig.~\ref{surv_variances} can be interpreted
in light of the theoretical bounds in Theorem~\ref{thm:fp_bounds}. By
Theorem~\ref{thm:fp_bounds}, the density evolution is controlled by the
minimum and maximum values of $m_0$. Hence, depending on the spread of
$m_0$, the defenders require different amounts of time to redistribute
toward a more effective configuration.
Particularly, in the moderately
spread case ($\sigma^2 = 1.4$), these values remain nearly unchanged and
stay close to $4.68 \times 10^{-6}$ and $8.96 \times 10^{-4}$,
respectively. In the more concentrated case
($\sigma^2 = 0.35$), the minimum and maximum values approach those of
the moderately spread case only after approximately $T=5$ and remain
around these levels thereafter. As a result, the survival probability of
the HVU decreases, but remains positive.

\vspace{-0.3cm}
\section{Conclusion and Future Work}

We introduced a mean-field game framework for large-scale
attacker--defender systems with the objective of protecting one or
multiple high-value units. By replacing agent-wise attrition with a
population-wise interaction mechanism based on the Wasserstein-$2$
distance, we derived a macroscopic attacker--defender model in the form
of a coupled HJB--FP system. We also established two-sided bounds for
the defender density, ensuring regular mass evolution, and proposed a
numerical scheme based on Sinkhorn approximation and PINNs. The
simulation results demonstrated that the model captures meaningful
strategic effects, including the impact of weapon strength and defender
dispersion on mission outcome.

Future work will focus on establishing a more complete theoretical
foundation for the proposed system, including existence and uniqueness
results, as well as extending the framework to more general attrition
mechanisms, heterogeneous populations, and moving HVUs.
\vspace{-0.2cm}





\bibliographystyle{acm}
\bibliography{cdcBib}

\end{document}